\DeclareMathOperator{\dg}{d}
\newtheorem{theorem}{Theorem}
\newtheorem{lemma}[theorem]{Lemma}
\newtheorem*{question}{Question}
\theoremstyle{remark}
\theoremstyle{definition}
\begin{document}

\title{Gerechte Designs with Rectangular Regions}

\author{J. Courtiel}
\address{ENS Cachan Bretagne, Rennes, France}
\email{julien.courtiel@eleves.bretagne.ens-cachan.fr}

\author{E. R. Vaughan}
\address{Queen Mary, University of London, UK}
\email{e.vaughan@qmul.ac.uk}

\keywords{gerechte design, latin square, sudoku}
\subjclass[2000]{05B15}
\date{\today}

\maketitle

\begin{abstract} A \emph{gerechte framework} is a partition of an $n \times n$ array into $n$ regions of $n$ cells each. A \emph{realization} of a gerechte framework is a latin square of order $n$ with the property that when its cells are partitioned by the framework, each region contains exactly one copy of each symbol. A \emph{gerechte design} is a gerechte framework together with a realization.

We investigate gerechte frameworks where each region is a rectangle. It seems plausible that all such frameworks have realizations, and we present some progress towards answering this question. In particular, we show that for all positive integers $s$ and $t$, any gerechte framework where each region is either an $s \times t$ rectangle or a $t\times s$ rectangle is realizable. \end{abstract}

\section{Introduction}

A \emph{latin square} of order $n$ is an $n \times n$ array in which each of the symbols $1, \dots, n$ appears  once in each row and once in each column. A \emph{gerechte framework} of order $n$ is a partition of the cells of an $n \times n$ array into $n$ regions each containing $n$ cells. We say that a latin square \emph{realizes} a gerechte framework if, when the cells of the square are partitioned by the framework, each symbol appears once in each region. Such a latin square is said to be a \emph{realization} of the framework, and frameworks with realizations are said to be \emph{realizable}.

A \emph{gerechte design} consists of a gerechte framework together with a latin square that realizes it. Gerechte designs were introduced in 1956 by Behrens \cite{behrens} who suggested their use in agricultural experiments (see e.g. \cite{bcc}). They have been rediscovered in recent years, due to the popularity of the puzzle game \emph{Sudoku}, invented by Howard Garns in 1979 \cite{bcc}.

It seems natural to ask which gerechte frameworks are realizable. In fact, given a gerechte framework, the problem of deciding whether it is realizable is NP-complete (\cite{vaughan}, answering a question of \cite{cgcs2007}). However, in this article we shall restrict our attention to gerechte frameworks where each region is a rectangle. In \cite{vaughan} it was asked whether all such gerechte frameworks can be realized. While we are not yet able to answer this question, we can present some progress towards resolving it. Our main result is the following.

\begin{figure}
\centering \includegraphics{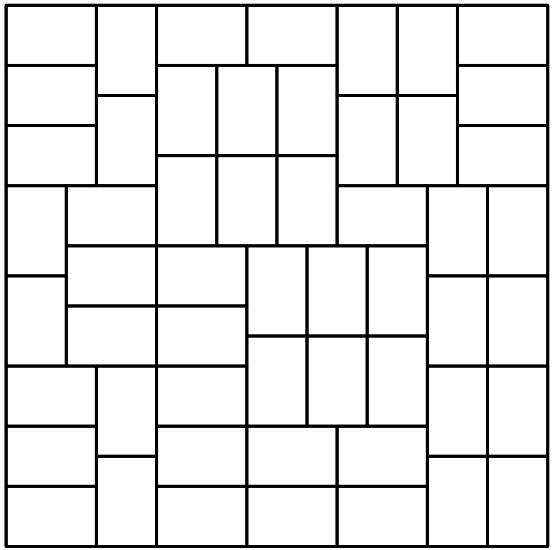}
\caption{A gerechte framework of order 54 with $6 \times 9$ and $9 \times 6$ rectangular regions. (Individual cells are not shown.)} \label{g54empty}
\end{figure}

\begin{theorem} Any gerechte framework for which each region is either an $s \times t$ rectangle or a $t\times s$ rectangle is realizable. \label{main} \end{theorem}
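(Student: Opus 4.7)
The plan is to recast realizability as an edge-coloring problem and then build the Latin square one symbol at a time by a Hall-type argument. Identify each cell of the array with an edge of $K_{n,n}$, the vertex classes being the rows and columns of the grid; a proper edge $n$-coloring of $K_{n,n}$ is exactly a Latin square of order $n$, and each region --- an $s\times t$ or $t\times s$ rectangle --- is the edge set of a complete bipartite subgraph $K_{s,t}$ or $K_{t,s}$ with $st = n$ edges. Realizability is therefore equivalent to producing a proper edge $n$-coloring of $K_{n,n}$ in which every region-subgraph receives all $n$ colors.

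To construct such a coloring I would proceed symbol by symbol. For each $k$ in turn, the set of $k$-coloured cells should form a common transversal of the row, column and region partitions; equivalently, a perfect matching in the tripartite $3$-uniform hypergraph whose vertex classes are rows, columns and regions and whose hyperedges are the cells. Removing one such transversal leaves a structure in which every row, column and region has $n-1$ remaining cells, and one iterates. The existence of a transversal at each step can be approached via Hall's theorem (or a $3$-dimensional matching argument), using the fact that every vertex of the hypergraph has degree $n$ initially and degree $n-j$ after $j$ transversals have been removed.

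The main obstacle, and the place where the two-shape hypothesis must genuinely be used, is verifying Hall's condition; for arbitrary rectangular partitions it can fail. The key structural input should be that in every row and every column the region boundaries cut the line into segments whose lengths lie in $\{s,t\}$, which tightly constrains how regions can meet and should rule out the bad subsets of rows that would otherwise violate Hall. If a uniform Hall argument is hard to push through, a natural alternative is to first settle the uniform subcase in which every rectangle has the same shape: there the framework necessarily decomposes into aligned bands, and an explicit Sudoku-style construction via residues modulo $s$ and $t$ gives a realization. One can then attack the mixed case by induction, peeling off at each step a reducible substructure --- for example an aligned band at the boundary, or a pair of adjacent rectangles that can be realized together --- whose existence is forced by the restriction to just two rectangle shapes, and applying the inductive hypothesis to the smaller framework that remains.
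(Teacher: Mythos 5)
Your reduction of realizability to a proper edge $n$-colouring of $K_{n,n}$ in which every region sees all colours is fine, but neither of your two routes to such a colouring is actually carried out, and both have a concrete obstruction at exactly the point you leave open. The symbol-by-symbol plan asks, at each step, for a set of $n$ cells meeting every row, every column and every region exactly once; this is a perfect matching in a tripartite $3$-uniform hypergraph, and there is no Hall-type theorem for that setting --- degree conditions of the form ``every vertex has degree $n-j$'' do not guarantee a matching, and indeed deciding realizability of a general gerechte framework is NP-complete, so no argument that uses only such counting data can work. Even granting that a transversal exists at every stage, removing transversals greedily need not be completable: what you need is a decomposition of the cell set into $n$ disjoint transversals, which is precisely the statement to be proved, and an arbitrary choice at step $j$ can block step $j+1$. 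The sentence ``the two-shape hypothesis \dots should rule out the bad subsets'' is where the entire difficulty lives, and nothing in the proposal engages with it.

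The fallback induction has a similar gap. The uniform subcase (all regions $s\times t$ with the same orientation) is indeed easy, but the claim that a mixed framework must contain a peelable substructure --- an aligned band at the boundary, or a pair of adjacent rectangles realizable together --- is unproved and false as stated: in the order-$54$ example with $6\times 9$ and $9\times 6$ regions the two orientations interlock along every boundary row, and removing a band would in any case leave a non-square array whose regions no longer have $n$ cells, so it is not a smaller instance of the same problem. By contrast, the paper's proof works globally rather than by peeling or by transversals: it sets $k=\gcd(s,t)$, proves a counting lemma (the number of $s\times t$ regions beginning in any row is a multiple of $s/k$), uses this to fill the reduced $n/k\times n/k$ array $F/k$ explicitly with $s't'$ symbols by cyclically permuting slices of horizontally and then vertically aligned regions, and finally invokes Hilton's amalgamated latin squares theorem (via K\"onig/equitable edge-colourings of bipartite graphs, where Hall-type arguments genuinely are available) to split the resulting outline latin square into a true latin square realizing $F$. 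Some device of this kind, which converts the three-dimensional constraint into bipartite edge-colouring problems, is what your outline is missing.
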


A gerechte framework where each region is an $s \times t$ rectangle has a simple structure, and it is not hard to show that it is realizable (see Theorem \ref{simplest}). But allowing both $s \times t$ and $t \times s$ rectangles (think ``landscape'' and ``portrait'') makes the situation more complicated. An example of a gerechte framework of order 54 with $6 \times 9$ and $9 \times 6$ regions is given in Figure \ref{g54empty}.

In this article, we shall make frequent use of Hilton's Amalgamated Latin Squares Theorem \cite{hilton}, to show that latin squares can be constructed that realize particular frameworks. In order for our article to be as self-contained as possible, we provide a proof of Hilton's theorem. Apart from using the well-known theorem of K\"onig \cite{bm}, the article is self-contained.

In Section~2 we give the background results on edge-colourings and amalgamated latin squares. We prove two special cases of  Theorem \ref{main} in Section~3, and in Section~4 we give the proof of Theorem \ref{main}. In Section~5 we give two generalizations of Theorem \ref{main}.

\section{Preliminaries}

\subsection{Edge-Colourings of Bipartite Multigraphs}

An edge-colouring of a multigraph $G$ is a map from $E(G) \rightarrow C$, where $C$ is a set of colours. If an edge-colouring has the property that no two adjacent edges are assigned the same colour, it is said to be \emph{proper}. The least number of colours needed for a proper edge-colouring of $G$ is called the \emph{chromatic index} of $G$, which is denoted $\chi'(G)$. The maximum degree of a multigraph $G$ is denoted $\Delta(G)$.

\begin{theorem} \emph{(K\"onig's Theorem \cite{bm})} Let $G$ be a bipartite multigraph. Then $\chi'(G) = \Delta(G)$. \label{konig} \end{theorem}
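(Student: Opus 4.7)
The lower bound $\chi'(G) \geq \Delta(G)$ is immediate: any vertex of maximum degree is incident to $\Delta(G)$ pairwise adjacent edges, which must receive distinct colours in any proper edge-colouring. The substantive content is the upper bound $\chi'(G) \leq \Delta(G)$, which I would establish by induction on $|E(G)|$ using a Kempe-chain argument, with bipartiteness entering at exactly one step.

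For the inductive step, set $\Delta = \Delta(G)$, pick an arbitrary edge $e = uv$, and apply the inductive hypothesis to $G - e$, which has maximum degree at most $\Delta$ and so admits a proper edge-colouring $c$ using the palette $\{1,\dots,\Delta\}$. Since $u$ is incident to at most $\Delta - 1$ edges of $G - e$, at least one colour is missing at $u$ under $c$; call it $\alpha$. Similarly, at least one colour $\beta$ is missing at $v$. If $\alpha = \beta$, then assigning this colour to $e$ extends $c$ to a proper edge-colouring of $G$, and we are done.

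Otherwise $\alpha \neq \beta$, and I would consider the spanning subgraph $H$ of $G - e$ consisting of all edges coloured $\alpha$ or $\beta$. Every vertex of $H$ has degree at most $2$, so $H$ is a disjoint union of paths and even cycles. The vertex $u$ has degree at most $1$ in $H$ (as $\alpha$ is absent at $u$), hence the component of $H$ containing $u$ is a path $P$ starting at $u$, beginning with a $\beta$-edge and alternating $\beta,\alpha,\beta,\alpha,\dots$.

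The crucial claim is that $P$ does not terminate at $v$. Suppose for contradiction that it does. Since $\beta$ is missing at $v$, the last edge of $P$ must be coloured $\alpha$, so $P$ begins with $\beta$ and ends with $\alpha$, forcing $P$ to have even length. However, $u$ and $v$ lie on opposite sides of the bipartition of $G$ (being joined by the edge $e$), so any walk from $u$ to $v$ in $G - e$ has odd length, a contradiction. Hence $P$ avoids $v$. Now I would interchange the colours $\alpha$ and $\beta$ on every edge of $P$; this produces a new proper edge-colouring $c'$ of $G - e$ under which $\beta$ is missing at $u$ (the edge of $P$ incident to $u$ has been recoloured $\alpha$) and still missing at $v$ (since no edge of $P$ is incident to $v$). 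Colouring $e$ with $\beta$ then extends $c'$ to a proper edge-colouring of $G$ in $\Delta$ colours, completing the induction.

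The only delicate point is verifying that the Kempe swap does not recreate a conflict at $v$, and this is precisely where the bipartite hypothesis is used via the parity argument above; everything else is bookkeeping.
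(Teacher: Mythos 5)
The paper does not actually prove this statement: K\"onig's theorem is quoted from \cite{bm}, and it is the one result the authors explicitly treat as external (``Apart from using the well-known theorem of K\"onig \cite{bm}, the article is self-contained''). Your argument is the standard inductive Kempe-chain proof, and it is correct; importantly, it goes through verbatim for \emph{multigraphs}, which is the form the paper needs (parallel edges only create $2$-cycles as components of the two-coloured subgraph $H$, and the parity argument is unaffected, since a parallel class joins the same pair of parts). One small point you should make explicit: to assert that ``no edge of $P$ is incident to $v$'' you must also rule out $v$ occurring as an \emph{internal} vertex of $P$, not just as its terminus. This follows because $\beta$ is missing at $v$, so $v$ has degree at most $1$ in $H$ and can only lie on $P$ as an endpoint, which your parity argument then excludes. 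With that one line added, your proof is complete, and including it would make the article fully self-contained rather than relying on the citation.
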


We are not only interested in proper colourings. The following theorem is a special case of a more general theorem of de Werra \cite{dewerra}, which says that for any bipartite multigraph, there is an edge-colouring that is \emph{equitable}; which is to say that among the edges incident with any particular vertex, the colours are shared out as equally as possible. The following special case can be deduced from K\"onig's Theorem.

\begin{theorem} Let $G$ be a bipartite multigraph, and $k$ a positive integer. If each vertex has degree a multiple of $k$, then there is an edge-colouring of $G$ with $k$ colours, such that for each vertex $v \in V(G)$, each colour appears $\dg(v)/k$ times on the edges incident with $v$. \label{vkonig} \end{theorem}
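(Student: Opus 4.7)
The plan is to reduce to K\"onig's Theorem by vertex-splitting. Since $k$ divides every $\dg(v)$, write $\dg(v) = k\,m_v$ with $m_v \geq 0$ an integer. I will build an auxiliary bipartite multigraph $G'$ in which each vertex $v$ of $G$ is replaced by $m_v$ copies of degree exactly $k$, so that $G'$ is $k$-regular bipartite. Theorem~\ref{konig} then yields a proper $k$-edge-colouring of $G'$, and this colouring lifts back to the desired equitable colouring of $G$.

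In detail, for each $v \in V(G)$ I introduce new vertices $v_1, \dots, v_{m_v}$ and partition the $k\,m_v$ edges of $G$ incident with $v$ arbitrarily into $m_v$ blocks of size $k$; the edges in the $i$th block are declared to be incident with $v_i$ in $G'$. Each original edge $uv$ then acquires exactly one endpoint among the copies of $u$ and exactly one endpoint among the copies of $v$, giving a natural bijection between $E(G)$ and $E(G')$. If $A, B$ is the bipartition of $G$, then the copies of the $A$-vertices and the copies of the $B$-vertices form a bipartition of $G'$, and every vertex of $G'$ has degree exactly $k$ by construction.

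Applying Theorem~\ref{konig} to $G'$ produces a proper edge-colouring with $k$ colours; as $G'$ is $k$-regular, each vertex of $G'$ is incident with exactly one edge of each colour. Transporting this colouring back to $G$ along the edge bijection, the edges incident with any $v \in V(G)$ are partitioned among its $m_v$ copies, and each copy contributes precisely one edge of every colour, so $v$ meets exactly $m_v = \dg(v)/k$ edges of each colour, as required.

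The main (and essentially only) point that needs care is the splitting step, which is possible precisely because $k \mid \dg(v)$ for every $v$. Beyond that there is no obstacle: the bipartiteness of $G'$ is inherited from $G$, and equitability at each original vertex is an immediate consequence of properness of the $k$-colouring on the $k$-regular auxiliary graph.
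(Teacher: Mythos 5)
Your proof is correct and follows essentially the same route as the paper: split each vertex $v$ into $\dg(v)/k$ copies of degree $k$, apply K\"onig's Theorem to the resulting $k$-regular bipartite multigraph, and transfer the colouring back. The paper's proof is a terser version of exactly this argument, so there is nothing to add.
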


\begin{proof} Let $G'$ be the bipartite multigraph obtained from $G$ by splitting each vertex $v \in V(G)$ into $\dg(v)/k$ new vertices, sharing out the edges in any way so that each new vertex has degree $k$. By Theorem \ref{konig} we can give $G'$ a proper edge-colouring with $k$ colours. But then we can colour each edge of $G$ the same colour as the corresponding edge in $G'$, to obtain the required edge-colouring. \end{proof}

\begin{figure}
\centering \begin{tikzpicture}
[matrix of nodes/.style={minimum size=7mm, execute at begin cell=\node\bgroup, execute at end cell=\egroup; }]
\draw[gray, dashed, step=7mm] (0,0) grid (4.2,4.2);
\draw[black, step=7mm] (0,0) rectangle (4.2,4.2)
(1.4,0) rectangle (2.8, 4.2)
(0, 2.1) rectangle (4.2, 3.5);
\node [matrix, matrix of nodes] at (2.1,2.1) {
1 & 4 & 2 & 3 & 5 & 6 \\
3 & 5 & 4 & 6 & 1 & 2 \\
4 & 6 & 1 & 5 & 2 & 3 \\
2 & 3 & 5 & 4 & 6 & 1 \\
6 & 1 & 3 & 2 & 4 & 5 \\
5 & 2 & 6 & 1 & 3 & 4 \\ };
\draw (2.1,0) node[below] {$L$};
\begin{scope}[xshift=6cm]
\draw[black,step=7mm] (0,0) rectangle (4.2,4.2)
(1.4,0) rectangle (2.8, 4.2)
(0, 2.1) rectangle (4.2, 3.5);
\node [matrix, matrix of nodes] at (2.1,2.1) {
1 & 3 & 2 & 3 & 4 & 4 \\
3 & 4 & 3 & 4 & 1 & 2 \\
3 & 4 & 1 & 4 & 2 & 3 \\
2 & 3 & 4 & 3 & 4 & 1 \\
4 & 1 & 3 & 2 & 3 & 4 \\
4 & 2 & 4 & 1 & 3 & 3 \\ };
\draw (2.1,0) node[below] {$L^*$};
\end{scope}
\end{tikzpicture}
\caption{A latin square $L$ (left), and $L^*$ (right), an amalgamation of $L$.}
\label{amalgamated}
\end{figure}

\subsection{Outline and Amalgamated Latin Squares}

A \emph{composition} of $n$ is an ordered tuple of positive integers that sum to $n$. Suppose $L$ is a latin square of order $n$, and $S = (p_1, \dots ,p_s)$, $T = (q_1, \dots, q_t)$ and $U = (r_1, \dots, r_u)$ are three compositions of $n$. Then the $(S,T,U)$-\emph{amalgamated latin square} $L^*$ is the $s \times t$ array whose cells contain symbols from $1, \dots, u$, where the number of copies of the symbol $k$ in the cell $(i,j)$ equals the number of occurrences of a symbol from
\[ \{r_1 + \dots +r_{k-1} + 1, \dots, r_1 + \dots + r_k\} \]
in one of the cells $(\alpha, \beta)$ of $L$ where
\[ \alpha \in \{p_1 + \dots + p_{i-1} + 1, \dots, p_1 + \dots + p_i\} \]
and
\[ \beta \in \{q_1 + \dots + q_{j-1} + 1, \dots, q_1 + \dots + q_j\}. \]
$L^*$ is said to be the $(S,T,U)$-\emph{amalgamation} of $L$.

In the example shown in Figure \ref{amalgamated}, $L$ is a latin square of order 6 and $L^*$ is the $(S,T,U)$-amalgamation of $L$, where $S=(1,2,3)$, $T=(2,2,2)$ and $U=(1,1,2,2)$.

\begin{figure}
\centering \includegraphics{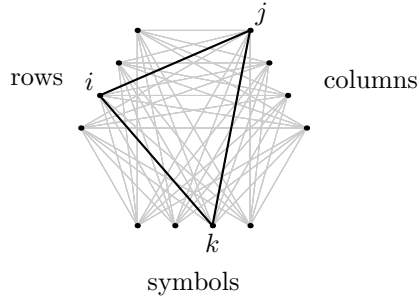}
\caption{A latin square can be regarded as a decomposition of $K_{n,n,n}$ into edge-disjoint triangles.} \label{tripartite}
\end{figure}

Note that in an $(S,T,U)$-amalgamated latin square the following conditions hold:

\begin{enumerate}[(i)]
\item symbol $\sigma_k$ occurs $p_ir_k$ times in row $i$.
\item symbol $\sigma_k$ occurs $q_jr_k$ times in column $j$.
\item cell $(i,j)$ contains $p_iq_j$ symbols (counting repetitions).
\end{enumerate}

Suppose $S = (p_1, \dots, p_s)$, $T = (q_1, \dots, q_t)$ and $U = (r_1, \dots, r_u)$ are three compositions of $n$. An $(S,T,U)$-\emph{outline latin square} is an $s \times t$ array, whose cells contain symbols from $1, \dots, u$, such that the above conditions (i), (ii) and (iii) hold. In other words, an outline latin square satisfies the same three numerical constraints as an amalgamated latin square, but we do not assume that it was obtained from a latin square by amalgamation. However, the following theorem states that every outline latin square can in fact be obtained by amalgamating a latin square.

\begin{theorem}[Hilton \cite{hilton}] Each outline latin square is an amalgamated latin square. \label{alst} \end{theorem}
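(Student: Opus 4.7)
The plan is to prove Theorem \ref{alst} by induction on the quantity $\mu = (n - s) + (n - t) + (n - u)$. In the base case $\mu = 0$, every part of every composition equals $1$, and conditions (i)--(iii) force the outline latin square to be a genuine $n \times n$ latin square, which is trivially its own $(S,T,U)$-amalgamation. For the inductive step, at least one part of one composition exceeds $1$; I would split that part into a piece of size $1$ and a piece of size (part$\,-1$), producing a finer outline latin square $L'$ that reamalgamates to $L$ by combining the two new rows (or columns, or symbols) cell-by-cell. By the inductive hypothesis $L'$ is an amalgamation of some genuine latin square $M$, and since the composition of two amalgamations is itself an amalgamation, $M$ then $(S,T,U)$-amalgamates to $L$.

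Suppose first that $p_i > 1$ for some row index $i$; the case of some $q_j > 1$ is symmetric. Build the bipartite multigraph $H$ on vertex set $\{c_1,\ldots,c_t\} \cup \{\sigma_1,\ldots,\sigma_u\}$ by placing, for each occurrence of a symbol $k$ in cell $(i,j)$ of row $i$, one edge between $c_j$ and $\sigma_k$. By conditions (ii) and (iii), $\deg(c_j) = p_i q_j$ and $\deg(\sigma_k) = p_i r_k$, so every degree is a multiple of $p_i$. Apply Theorem \ref{vkonig} with $p_i$ colours and single out one colour class. At $c_j$ this class has $q_j$ edges and at $\sigma_k$ it has $r_k$ edges, so assigning to the ``new short row'' of length $1$ the symbols recorded by that class puts exactly $q_j$ symbols into its cell in column $j$ and exactly $r_k$ copies of $\sigma_k$ across its row; the remaining $p_i - 1$ classes together produce the other new row of length $p_i - 1$. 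Conditions (i)--(iii) for $L'$ with the refined composition $S' = (p_1,\ldots,p_{i-1},1,p_i-1,p_{i+1},\ldots,p_s)$ then follow directly.

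The remaining situation, in which all $p_i$ and $q_j$ equal $1$ but some $r_k > 1$, needs a different bipartite multigraph and is the trickiest step to set up. Here I want to split symbol $k$ into a new symbol $k'$ of multiplicity $1$ and a new symbol $k''$ of multiplicity $r_k - 1$. I would take the bipartite multigraph $H'$ on vertex set $\{\rho_1,\ldots,\rho_s\} \cup \{c_1,\ldots,c_t\}$ and insert an edge $\rho_i c_j$ for each occurrence of symbol $k$ in cell $(i,j)$. By (i) and (ii), $\deg(\rho_i) = p_i r_k$ and $\deg(c_j) = q_j r_k$, both multiples of $r_k$. Apply Theorem \ref{vkonig} with $r_k$ colours, pick one colour class, and relabel the $k$-occurrences in its edges as $k'$, leaving the remaining $k$-occurrences to become $k''$. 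The degree conditions of the colour class guarantee exactly $p_i$ copies of $k'$ per row, $q_j$ per column, and $(p_i-1)$ and $(q_j-1)$\,-free analogues for $k''$, so conditions (i)--(iii) for the refined outline latin square with composition $U' = (r_1,\ldots,r_{k-1},1,r_k-1,r_{k+1},\ldots,r_u)$ are immediate, closing the induction.

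The chief obstacle throughout is choosing the right bipartite multigraph for each type of split so that Theorem \ref{vkonig} applies cleanly; once the correct column--symbol graph (for row/column splits) and row--column graph (for symbol splits) are in hand, the degree divisibility conditions are direct consequences of (i)--(iii), and the verification that the refined array is again an outline latin square is routine bookkeeping.
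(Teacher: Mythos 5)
Your proposal is correct and is essentially the paper's own argument: the key step in both is the column--symbol bipartite multigraph for the row being split, with degrees $p_iq_j$ and $p_ir_k$, coloured equitably via Theorem~\ref{vkonig}, and the verification of (i)--(iii) for the refined array. The only differences are cosmetic --- you split a part into $(1,p_i-1)$ iteratively and treat the symbol case explicitly with a row--column graph, whereas the paper splits a row into all $p_s$ pieces at once and disposes of columns and symbols by the $K_{n,n,n}$ symmetry (and your ``$(p_i-1)$, $(q_j-1)$'' counts for $k''$ should read $p_i(r_k-1)$ and $q_j(r_k-1)$, a harmless slip).
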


It is sometimes useful to view a latin square as a triangle decomposition of the complete tripartite graph $K_{n,n,n}$, in which the triangle $(i,j,k)$ belongs to the decomposition if and only if cell $(i,j)$ of the latin square contains the symbol $k$. (See Figure \ref{tripartite}.) So an amalgamated latin square is obtained from a latin square by amalgamating vertices of $K_{n,n,n}$ (where vertices can be amalgamated only if they are in the same part). With this viewpoint, it is clear that conditions (i), (ii) and (iii) are symmetric.

\begin{proof}[Proof of Theorem \ref{alst}] Let $M$ be an $(S,T,U)$-outline latin square of order $n$, where $S = (p_1, \dots, p_s)$, $T = (q_1, \dots, q_t)$ and $U = (r_1, \dots, r_u)$ are three compositions of $n$. If $S = T = U = (1, \dots, 1)$ then $M$ is a latin square and there is nothing to prove. So suppose that at least one of $S$, $T$, $U$ is not equal to $(1, \dots, 1)$. In view of the symmetry, we may assume that $S \ne (1, \dots, 1)$ and specifically that $p_s \ne 1$.

We shall show how we can obtain from $M$ an $(S',T,U)$-outline latin square $M'$, where $S' = (p_1, \dots, p_{s-1}, 1, \dots, 1)$. (In other words, we shall split row $s$ into $p_s$ rows.) By repeating this procedure on all the rows, and then on the columns and symbols, we can obtain an $(S'',T'',U'')$-outline latin square, where $S'' = T'' = U'' = (1, \dots, 1)$, or, in other words, a latin square. It will then follow that $M$ is an $(S,T,U)$-amalgamated latin square.

Let $G$ be the bipartite graph with vertex sets $\{c_1, \dots, c_t\}$ and $\{\sigma_1, \dots, \sigma_u\}$, representing the columns and symbols respectively, where there are $l$ edges between $c_j$ and $\sigma_k$ if the cell $(s, j)$ contains $l$ copies of symbol $k$. By condition (iii), vertex $c_j$ has degree $p_s q_j$, and by condition (i), vertex $\sigma_k$ has degree $p_s r_k$. By Theorem \ref{vkonig}, we can give $G$ an edge-colouring with $p_s$ colours $k_1, \dots, k_{p_s}$, so that at each vertex, each colour appears the same number of times on the incident edges. Specifically, at vertex $c_j$, each colour appears $q_j$ times, and at vertex $\sigma_k$, each colour appears $r_k$ times.

Now we can construct the $(S',T,U)$-outline latin square $M'$. The first $s-1$ rows will be the same as in $M$. Rows $s, \dots, s + p_s - 1$ will be constructed as follows. For row $s + i - 1$, where $1 \le i \le p_s$, if $l$ is the number of edges coloured $k_i$ between $c_j$ and $\sigma_k$ in $G$, we place $l$ copies of the symbol $\sigma_k$ in cell $(s + i - 1, j)$.

We must now verify that $M'$ satisfies the conditions (i), (ii) and (iii). We have not altered the number of times each symbol appears in any column, so condition (ii) will certainly hold. Condition (i) will certainly hold for the first $s-1$ rows, so we just need to check it for the remaining rows. In the edge-colouring of $G$, the vertex $\sigma_k$ is incident with $r_k$ edges of each colour, so will appear $r_k$ times in each of the rows $s, \dots, s + p_s - 1$, so condition (i) is satisfied.

Condition (iii) will certainly hold for cells in the first $s-1$ rows. The vertex $c_j$ in $G$ is incident with $q_j$ edges of each colour, so each cell in the last $p_s$ rows of column $j$ will contain $q_j$ symbols, so condition (iii) is satisfied. Hence $M'$ is indeed an $(S',T,U)$-outline latin square, and the proof of Theorem \ref{alst} is complete. \end{proof}

\section{Gerechte Designs with Rectangular Regions: The~Simplest~Cases}

The first case we consider is that of gerechte frameworks where each region is an $s \times t$ rectangle. Using Theorem~\ref{alst}, we can show that all such frameworks are realizable.

\begin{theorem} Let $F$ be a gerechte framework of order $n = st$, where each region is an $s \times t$ rectangle. Then $F$ is realizable. \label{simplest} \end{theorem}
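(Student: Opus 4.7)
The plan is to reduce the statement to Hilton's Amalgamated Latin Squares Theorem (Theorem~\ref{alst}) after first pinning down the structure of $F$.

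First I would show that $F$ is forced to be the ``standard grid'' tiling: the $t \times s$ array of $s \times t$ blocks obtained by cutting the grid at every $s$-th row and every $t$-th column. Any $s \times t$ rectangle meeting row $1$ must span rows $1,\dots,s$ (these are the only $s$ consecutive rows containing row $1$), so the rectangle containing cell $(1,1)$ occupies rows $1,\dots,s$ and columns $1,\dots,t$. The rectangle containing cell $(1,t+1)$ again spans rows $1,\dots,s$, and its column range $c,\dots,c+t-1$ must satisfy both $c \le t+1$ and $c \ge t+1$ (the columns $1,\dots,t$ of row $1$ are already covered), so $c = t+1$. Iterating along row $1$ shows that rows $1,\dots,s$ are tiled by $s$ rectangles with column boundaries at the multiples of $t$. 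Applying the same argument to rows $s+1,\dots,2s$, then to rows $2s+1,\dots,3s$, and so on, forces $F$ to have the claimed grid structure.

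Next I would introduce compositions $S = (s,s,\dots,s)$ of length $t$, $T = (t,t,\dots,t)$ of length $s$ and $U = (1,1,\dots,1)$ of length $n$, all compositions of $n = st$. Let $M$ be the $t \times s$ array in which every cell contains each of the symbols $1,\dots,n$ exactly once. Conditions (i)--(iii) for an outline latin square are immediate: each row of $M$ contains $p_i r_k = s$ copies of each symbol, each column contains $q_j r_k = t$ copies, and each cell contains $p_i q_j = st = n$ symbols.

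Finally I would invoke Theorem~\ref{alst}: $M$ is the $(S,T,U)$-amalgamation of some latin square $L$ of order $n$. By construction, cell $(i,j)$ of $M$ is the multiset of symbols appearing in the block of $L$ formed by rows $(i-1)s+1,\dots,is$ and columns $(j-1)t+1,\dots,jt$; this block is exactly the $(i,j)$ region of $F$, and cell $(i,j)$ of $M$ contains each symbol once. Hence $L$ realizes $F$.

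There is no real obstacle here: the only conceptual step is to notice that the ``one symbol per block'' condition is captured exactly by the totally uniform outline latin square, so once Theorem~\ref{alst} is available the realization comes out for free. The geometric argument of the first paragraph is elementary, and the verification of the outline-latin-square conditions is a single line.
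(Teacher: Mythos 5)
Your proof is correct and follows essentially the same route as the paper: build the $t \times s$ outline latin square in which every cell contains all of $1,\dots,n$, check conditions (i)--(iii), and apply Theorem~\ref{alst} to obtain a latin square whose amalgamation blocks are the regions of $F$. Your opening paragraph, showing that a framework with only $s \times t$ regions is forced to be the standard grid, makes explicit a structural fact the paper only alludes to (``has a simple structure''), which is a reasonable addition rather than a different approach.
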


\begin{proof} Construct a $t \times s$ array $M$ where each cell contains \textit{all} of the symbols $1, \dots, n$. Thus each row of $M$ contains each symbol $s$ times, each column contains each symbol $t$ times, and each cell contains $n = st$ symbols. So $M$ is an $(S,T,U)$-outline latin square, where $S=(s, \dots, s)$, $T=(t, \dots, t)$ and $U=(1, \dots, 1)$. By Theorem \ref{alst}, $M$ is an amalgamation of a latin square $L$. Assume that the cells of $L$ are partitioned by $F$. Then because $M$ is the $(S,T,U)$-amalgamation of $L$, each region of $L$ contains all of the symbols $1, \dots, n$, and so $L$ is a realization of $F$. \end{proof}

Suppose we have a framework $F$ where each region is an $s \times t$ rectangle or a $t \times s$ rectangle, and there is some $k > 1$ for which $k \mid s$ and $k \mid t$. Then the \textit{reduced framework} $F/k$ is a partition of the cells of an $n/k \times n/k$ array that is obtained from $F$ by amalgamating $k \times k$ squares (see Figure \ref{ger3}).

\begin{theorem} Let $F$ be a gerechte framework of order $n = st$, where each region is an $s \times t$ rectangle, or a $t \times s$ rectangle, with the condition that $t=cs$ for some integer $c$. Then $F$ is realizable. \label{divides} \end{theorem}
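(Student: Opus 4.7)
The plan is to lift an outline latin square on the $cs \times cs$ super-grid to a realization of $F$, using Hilton's Theorem. First, I would observe that every region of $F$, having both side lengths divisible by $s$, aligns with the $s \times s$ grid: a short induction on the lexicographic position of regions, starting from the region anchored at $(1,1)$, shows that every region's top-left corner has coordinates $\equiv 1 \pmod{s}$ (if a region has top-left $(r,c)$ with $c>1$, the region just to its left has width $s$ or $cs$ and top-left column $\equiv 1 \pmod s$ by induction, forcing $c \equiv 1 \pmod{s}$; similarly for $r$). Consequently $F$ induces a well-defined reduced framework $F/s$ on the $cs \times cs$ super-grid whose regions are $1 \times c$ (\emph{horizontal}) or $c \times 1$ (\emph{vertical}) strips.

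Next, I would construct an outline latin square $M$ of dimensions $cs \times cs$ over the symbol set $\{1,\dots,n\}$, with amalgamation parameters $S = T = (s,\dots,s)$ (each of length $cs$) and $U = (1,\dots,1)$, having the extra property that in each $F/s$-strip every symbol appears exactly once. Hilton's Theorem (Theorem~\ref{alst}) then yields a latin square $L$ of order $n = cs^2$ whose $(S,T,U)$-amalgamation is $M$. By the alignment step, every $F$-region of $L$ is a union of $c$ aligned $s \times s$ super-cells, and the extra property ensures each such region contains every symbol once, so $L$ realizes $F$.

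The main obstacle is the construction of $M$. I would identify the symbol set with $[c] \times [s^2]$. Let $a_i$ be the number of horizontal strips of $F/s$ contained in super-row $i$, so super-row $i$ is crossed by exactly $c(s-a_i)$ vertical strips (since its $cs$ super-cells decompose as $a_i c$ from horizontal strips plus one super-cell per vertical crossing); define $b_j$ analogously. Form the bipartite multigraph $H_v$ whose vertices are the vertical strips and the super-rows, with an edge $(R, U_i)$ whenever $R$ meets $U_i$: every vertical strip has degree $c$ and super-row $U_i$ has degree $c(s - a_i)$. All degrees being multiples of $c$, Theorem~\ref{vkonig} produces a $c$-edge-colouring that, for each vertical strip $R$, gives a bijection $\pi_R$ from the super-rows of $R$ to $[c]$, and for which each super-row $U_i$ sees each colour exactly $s - a_i$ times. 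The symmetric construction on horizontal strips and super-columns produces bijections $\tau_R$ to $[c]$, with each super-column $V_j$ seeing each colour $s - b_j$ times.

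Finally, define $M$: in super-cell $(i, j)$, which lies in a unique strip $R$, place the $s^2$ symbols $(k, 1), \dots, (k, s^2)$, where $k = \pi_R(U_i)$ if $R$ is vertical and $k = \tau_R(V_j)$ if $R$ is horizontal. Each super-cell then contains $s^2$ symbols, and each strip $R$ contains each symbol $(k, l)$ exactly once because $\pi_R$ (respectively $\tau_R$) is a bijection. For the super-row condition, a fixed symbol $(k, l)$ appears once in each horizontal strip in super-row $i$ (since exactly one super-column $V_j$ of that strip satisfies $\tau_R(V_j) = k$) and once in each of the $s - a_i$ vertical strips through $U_i$ whose colour at $U_i$ is $k$, giving $a_i + (s - a_i) = s$ occurrences in total; the super-column condition is symmetric. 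Hence $M$ is the desired outline latin square, and Hilton's Theorem delivers the realization $L$.
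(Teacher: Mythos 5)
Your proof is correct, and its outer skeleton (reduce by $s$, fill the $cs\times cs$ reduced array so that every strip of $F/s$ gets the right content, blow each symbol up into $s^2$ symbols, and apply Theorem \ref{alst}) is the same as the paper's; the difference lies in how the reduced array is filled. The paper does this with a single explicit formula: cell $(i,j)$ of the reduced array receives the symbol of $\{1,\dots,c\}$ congruent to $i+j-1 \pmod{c}$, so that adjacent cells in a row or column differ by $1$ modulo $c$, whence every $1\times c$ and every $c\times 1$ strip automatically contains each of $1,\dots,c$ once, and every row and column contains each symbol $s$ times. You instead invoke Theorem \ref{vkonig} twice, on the bipartite graphs of vertical strips versus super-rows and of horizontal strips versus super-columns, to distribute the ``first coordinates'' $[c]$ equitably; your degree counts ($c$ for each strip, $c(s-a_i)$ resp.\ $c(s-b_j)$ for rows and columns) and the verification of conditions (i)--(iii) are all correct. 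What each route buys: the paper's diagonal filling is elementary, needs no edge-colouring at this stage, and never consults where the strips actually lie, which is exactly why the paper can remark afterwards that the same latin square $L$ realizes \emph{every} framework of the stated type simultaneously; your colouring is tailored to the particular framework, so it loses that uniformity, but it is closer in spirit to the equitable-colouring techniques the paper deploys later (Lemma \ref{row-realization}, Theorem \ref{tree-structure}), and you also make explicit the grid-alignment fact (all region corners lie on the $s\times s$ grid) that the paper leaves implicit in its definition of the reduced framework $F/s$ -- a small but genuine point in favour of your write-up.
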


\begin{proof} Construct a $t \times s$ array $M$ where cell $(i,j)$ contains the symbol from $1, \dots, c$ that is congruent to $i + j - 1$ modulo $c$. Assume that the cells of $M$ are partitioned by $F/s$. Then each region contains $c$ cells, and because the symbols in adjacent cells differ by 1 modulo $c$, each region will in fact contain each of the symbols $1, \dots, c$ exactly once. Let $M'$ be the array obtained from $M$ by replacing each symbol $i$ with the $s^2$ symbols
\[ (i-1)s^2+1, \dots, is^2. \]
So when the cells of $M'$ are partitioned by $F/s$, each region contains one copy of each of the symbols $1, \dots, n$. Moreover, $M'$ is an $(S,T,U)$-outline latin square, where $S=(s, \dots, s)$, $T=(s, \dots, s)$ and $U=(1, \dots, 1)$. By Theorem \ref{alst}, $M'$ is an amalgamation of a latin square $L$. Assume that the cells of $L$ are partitioned by $F$. Then because $M'$ is the $(S,T,U)$-amalgamation of $L$, each region of $L$ contains all of the symbols $1, \dots, n$, and so $L$ is a realization of $F$.  \end{proof}

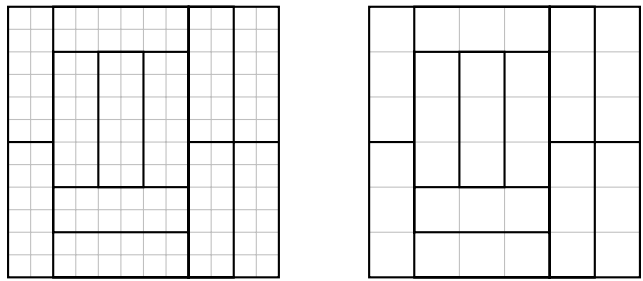
\begin{figure}
\centering
\begin{tikzpicture}
\begin{scope}[scale=0.6]
\draw[lightgray, thin, step=0.5] (0,0) grid (6,6);
\draw[black, thick] (0,0) rectangle (6, 6) (1,1) rectangle (4,6)
(1,0) rectangle (4,5) (4,0) rectangle (5,6) (2,2) rectangle (3,5)
(0,3)--(1,3) (4,3)--(6,3) (1,2)--(4,2);
\end{scope}
\begin{scope}[scale=0.6, xshift=8cm]
\draw[lightgray, thin, step=1] (0,0) grid (6,6);
\draw[black, thick] (0,0) rectangle (6, 6) (1,1) rectangle (4,6)
(1,0) rectangle (4,5) (4,0) rectangle (5,6) (2,2) rectangle (3,5)
(0,3)--(1,3) (4,3)--(6,3) (1,2)--(4,2);
\end{scope}
\end{tikzpicture}
\caption{A gerechte framework $F$ with $2 \times 6$ and $6 \times 2$ regions (left), and the framework $F/2$, obtained from $F$ by amalgamating $2 \times 2$ squares (right).}
\label{ger3}
\end{figure}

Note that in the proof it was not necessary to make use of the structure of the framework $F$. In fact $L$ is a realization of any framework that satisfies the conditions given.

We can illustrate Theorem \ref{divides} with the following example. Figure \ref{ger3} (left) shows a gerechte framework $F$ with $2 \times 6$ and $6 \times 2$ regions. We can amalgamate $2 \times 2$ squares of $F$ to give a framework $F/2$ that has $1 \times 3$ and $3 \times 1$ regions, as shown in Figure \ref{ger3} (right). We can then fill each cell $(i,j)$ with the symbol from $1,2,3$ that is congruent to $i + j - 1 \mod 3$, as illustrated in Figure \ref{ger4}. When this array is partitioned by $F/2$, each region contains one copy of each symbol. Finally, we can replace the symbols $1,2,3$ with the sets $\{1,2,3,4\}$, $\{5,6,7,8\}$ and $\{9,10,11,12\}$ respectively to get an amalgamated latin square, and apply Theorem \ref{alst} to get a latin square $L$ that realizes $F$.

\begin{figure}
\centering
\begin{tikzpicture}
[matrix of nodes/.style={minimum size=6mm, execute at begin cell=\node\bgroup, execute at end cell=\egroup; }]
\begin{scope}[scale=0.6]
\node [matrix, matrix of nodes] at (3,3) {
1 & 2 & 3 & 1 & 2 & 3 \\
2 & 3 & 1 & 2 & 3 & 1 \\
3 & 1 & 2 & 3 & 1 & 2 \\
1 & 2 & 3 & 1 & 2 & 3 \\
2 & 3 & 1 & 2 & 3 & 1 \\
3 & 1 & 2 & 3 & 1 & 2 \\ };
\draw[lightgray, thin, step=1] (0,0) grid (6,6);
\draw[black, thick] (0,0) rectangle (6, 6) (1,1) rectangle (4,6)
(1,0) rectangle (4,5) (4,0) rectangle (5,6) (2,2) rectangle (3,5)
(0,3)--(1,3) (4,3)--(6,3) (1,2)--(4,2);
\end{scope}
\end{tikzpicture}
\caption{An example of an $M$ from Theorem \ref{divides}.}
\label{ger4}
\end{figure}

\section{Gerechte Designs with $s \times t$ and $t \times s$ regions: the~general~case.}

In this section we shall prove Theorem \ref{main}. First some definitions. We say that a region $R$ \textit{begins} in row $i$ if the uppermost cells in $R$ below to row $i$. Likewise we say that a region begins in column $j$ if the leftmost cells in $R$ belong to column $j$.

\begin{lemma} Let $F$ be a gerechte framework of order $n$, where each region is either an $s \times t$ rectangle or a $t \times s$ rectangle. Suppose $s = s'k$ and $t = t'k$, where $k = \gcd(s,t)$. Then the number of $s \times t$ regions that begin in row $i$, for each $1 \le i \le n$, is a multiple of $s'$. \label{paving} \end{lemma}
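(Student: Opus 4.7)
The plan is to convert the geometric statement into a simple linear identity for each row, then telescope to get a modular congruence, and finally iterate the congruence in the direction in which boundary values vanish trivially.

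For each $i$ between $1$ and $n$, let $A_i$ and $B_i$ denote the number of $s \times t$ and $t \times s$ regions, respectively, that begin in row $i$ (with $A_j = B_j = 0$ for $j$ outside the valid range). Write
\[ P_i = \sum_{j=i-s+1}^{i} A_j \quad \text{and} \quad Q_i = \sum_{j=i-t+1}^{i} B_j \]
for the numbers of $s \times t$ and $t \times s$ regions that \emph{contain} a cell of row $i$. Since each $s \times t$ region meeting row $i$ occupies $t$ cells in that row while each $t \times s$ region occupies $s$ cells, counting the $n$ cells of row $i$ yields the key identity
\[ tP_i + sQ_i = n. \]

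Subtracting this identity at $i-1$ from the one at $i$ (for any $2 \le i \le n$) and using the telescoping relations $P_i - P_{i-1} = A_i - A_{i-s}$ and $Q_i - Q_{i-1} = B_i - B_{i-t}$ produces
\[ t(A_i - A_{i-s}) = s(B_{i-t} - B_i). \]
Cancelling the factor $k = \gcd(s,t)$ gives $t'(A_i - A_{i-s}) = s'(B_{i-t} - B_i)$. Because $\gcd(s',t') = 1$, this forces $s' \mid (A_i - A_{i-s})$, i.e.\ $A_i \equiv A_{i-s} \pmod{s'}$.

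To finish, iterate upward in $i$: $A_i \equiv A_{i+s} \equiv A_{i+2s} \equiv \cdots \pmod{s'}$; once the index exceeds $n - s + 1$ the term is identically zero, so $A_i \equiv 0 \pmod{s'}$ as desired. The only delicate point I anticipate is the choice of direction in this last step: iterating downward would demand $A_1 \equiv 0 \pmod{s'}$ as a base case, and the row-$1$ identity $t'A_1 + s'B_1 = n/k$ does not supply it unaided (this is essentially equivalent to proving $s \mid n$). Iterating upward sidesteps the issue entirely, because $A_j$ vanishes automatically for $j > n - s + 1$, furnishing a free base case at the top.
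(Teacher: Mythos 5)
Your telescoped congruence $A_i \equiv A_{i-s} \pmod{s'}$, valid for $2 \le i \le n$, is a clean reformulation of the identity the paper uses, but the way you close the argument has a genuine boundary gap. The counting identity $tP_i + sQ_i = n$ holds only for actual rows $1 \le i \le n$, so the congruence linking index $i'$ to $i'-s$ is available only for $i' \le n$. Now $n = st$ (a gerechte framework of order $n$ has $n$ regions of $n$ cells each, and every region here has $st$ cells), so $s \mid n$ and $n-s+1 \equiv 1 \pmod{s}$. Hence for every starting index $i \equiv 1 \pmod{s}$ (in particular $i=1$) your upward chain lands exactly on $n-s+1$, and the next step would need the relation at the nonexistent row $n+1$, where in fact $P_{n+1} = Q_{n+1} = 0$ and the identity fails; so the step $A_{n-s+1} \equiv A_{n+1} = 0$ is unjustified, and the whole residue class of $i=1$ is left uncovered. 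The ``free base case at the top'' is thus not free precisely where you need it.

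The fix is one line and is essentially the paper's base case, just applied at the bottom: since $P_n = A_{n-s+1}$ and $Q_n = B_{n-t+1}$, the row-$n$ identity gives $t'A_{n-s+1} = s'\bigl(t - B_{n-t+1}\bigr)$, whence $s' \mid A_{n-s+1}$, and your chain then handles the remaining class. Alternatively, note that your reason for rejecting the downward direction is mistaken: $n = st$ is automatic, so the row-1 identity reads $t'A_1 + s'B_1 = s't$ and yields $s' \mid A_1$ immediately; iterating downward from there is exactly the paper's induction (the paper phrases it with the windowed sum $n_{i-s+1} + \dots + n_i$, which telescopes to your recurrence).
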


\begin{proof} For $1 \le i \le n$, let $n_i$ be the number of $s \times t$ regions that begin in row~$i$, and let $m_i$ be the number of $t \times s$ regions that begin in row $i$. Since each cell in row $1$ is in the uppermost row of some region, $t n_1 + s m_1 = n = st$. Dividing through by $k$, we have $t' n_1 + s' m_1 = s't$, and so $t' n_1 = s'(t - m_1)$. Since $s'$ and $t'$ are relatively prime, we must have $s' \mid n_1$.

\begin{figure}
\centering \includegraphics{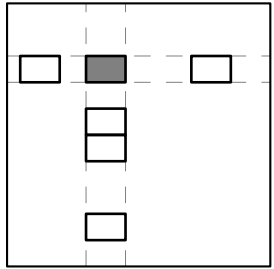}
\caption{Horizontally and vertically aligned regions.} \label{aligned}
\end{figure}

Now assume that $s'$ divides $n_1, \dots, n_{i-1}$ and consider row $i$. The cells in row $i$ that are in $s \times t$ regions, are in regions that begin in rows $i - s + 1, \dots, i$. If $j$ is the number of $t \times s$ regions that have cells in row $i$, then
\[ st = t(n_{i - s + 1} + \dots + n_i) + sj.\]
Dividing through by $k$ we have
\[s't = t'(n_{i - s + 1} + \dots + n_i) + s'j,\] and so
\[t' n_i = s'(t-j) - t'(n_{i - s + 1} + \dots + n_{i-1}))\]
Since each term on the right-hand side is divisible by $s'$, and $s'$ and $t'$ are relatively prime, we have $s' \mid n_i$.
\end{proof}

A region $Q$ is said to be \textit{horizontally aligned} with a region $R$ if $Q$ and $R$ begin in the same row, and have the same height. Similarly, $Q'$ is said to be \textit{vertically aligned} with $R$ if $Q'$ and $R$ begin in the same column, and have the same width (see Figure \ref{aligned}). We are now ready to prove Theorem \ref{main}.

\begin{proof}[Proof of Theorem \ref{main}] Let $F$ be a gerechte framework of order $n$, where each region is either an $s \times t$ rectangle or a $t \times s$ rectangle. Suppose $s = s'k$ and $t = t'k$, where $k = \gcd(s,t)$.

We shall give a procedure for constructing a latin square $L$ that realizes $F$. The first step is to construct an $n/k \times n/k$ array $M$, where each cell contains one symbol from $1, \dots, s't'$, each symbol appears $k$ times in each row and column, and when partitioned by $F/k$, each symbol appears once in each region.

$M$ will be constructed from an empty array, by first filling the cells in the $s' \times t'$ regions, and then the cells in the $t' \times s'$ regions. These two steps can be performed entirely independently. A \emph{slice} is a maximal set of cells of $M$ that are all in the same row and region.

Let $R$ be an $s' \times t'$ region.  We can fill $R$ by placing one copy of each of the symbols $1, \dots, s't'$ in $R$. The $s' \times t'$ regions which are horizontally aligned with $R$ can then be filled by cyclically permuting the slices of $R$, as illustrated in Figure~\ref{slices1}.

By repeating this procedure, all the cells in the $s' \times t'$ regions can be filled, with each region containing all of the symbols $1, \dots, s't'$. Moreover, due to the way that that the slices are permuted, each row will contain the same number of copies of each symbol. However, it will in general not be the case that each column contains the same number of copies to each symbol.

\begin{figure}
\centering \includegraphics{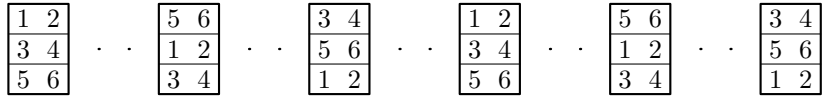} 
\caption{Filling $R$ and horizontally-aligned regions.} \label{slices1}
\end{figure}

Consider $R$ and the $s' \times t'$ regions which are vertically aligned with $R$. By Lemma~\ref{paving}, the number of $s' \times t'$ regions vertically aligned with $R$, including $R$ itself, is a multiple of $t'$. So we can take, from these regions, the $t'$ slices that contain $1, \dots, t'$, and cyclically permute them, as illustrated in Figure~\ref{slices2}. This can also be done for the slices that contain $t' + 1, \dots, 2t'$, and so on. This procedure can be repeated for all the $s' \times t'$ regions. Once this is done, each column will contain the same number of copies of each symbol.

The above procedure can then be performed on the $t' \times s'$ regions. In this way, $M$ can be completely filled, with each symbol appearing the same number of times (in fact $k$ times) in each row and column, and each symbol appearing once in each region.

Let $M'$ be the array obtained from $M$ by replacing each symbol $i$ with the $k^2$ symbols
\[ (i-1)k^2+1, \dots, ik^2. \]
So when the cells of $M'$ are partitioned by $F/k$, each region contains each of the symbols $1, \dots, n$ exactly once. Moreover, $M'$ is an $(S,T,U)$-outline latin square, where $S=(k, \dots, k)$, $T=(k, \dots, k)$ and $U=(1, \dots, 1)$. By Theorem~ \ref{alst}, $M'$ is an amalgamation of a latin square $L$. Assume that the cells of $L$ are partitioned by $F$. Then because $M'$ is the $(S,T,U)$-amalgamation of $L$, each region of $L$ contains all of the symbols $1, \dots, n$, and so $L$ is a realization of $F$. \end{proof}

\begin{figure}
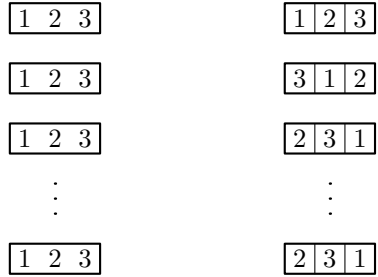

\centering \includegraphics{ger22.mps} \hspace{1cm} \hspace{1cm} \includegraphics{ger24.mps}
\caption{Permuting the slices of $R$ and vertically-aligned regions.} \label{slices2}
\end{figure}

\section{Gerechte Designs with rectangular regions: two~other~cases.}

In this section we shall consider two further cases. First, we give a general result about gerechte frameworks. An $n \times n$ array is said to be a \emph{row-latin square}, of order $n$, if each symbol appears exactly once in each row; and it is said to be a \emph{column-latin square} if each symbol appears exactly once in each column. (So a latin square is both a row-latin square and a column-latin square.)

Given a gerechte framework $F$ of order $n$, a \emph{row-realization} of $F$ is a row-latin square $W$, such that when the cells of $W$ are partitioned by $F$, each symbol appears exactly once in each region.

\begin{figure}
\centering \includegraphics{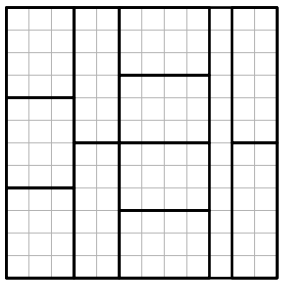}
\caption{A gerechte framework whose regions are arranged in columns.} \label{columns-fig}
\end{figure}

\begin{lemma} Every gerechte framework has a row-realization. \label{row-realization} \end{lemma}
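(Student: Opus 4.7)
The plan is to reduce the existence of a row-realization to an edge-colouring problem and then invoke K\"onig's theorem (Theorem~\ref{konig}).

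Specifically, I would construct a bipartite multigraph $G$ whose two vertex classes are $\{R_1,\dots,R_n\}$ (one vertex per row of the grid) and $\{A_1,\dots,A_n\}$ (one vertex per region of $F$). For every cell $(i,j)$ of the $n\times n$ array, add one edge of $G$ joining $R_i$ to the region $A$ containing $(i,j)$. Under this construction the edges of $G$ are in bijection with the cells of the grid, and $G$ is genuinely a multigraph because a single row may meet a single region in several cells. Each row-vertex $R_i$ has degree exactly $n$, since row $i$ has $n$ cells, and each region-vertex $A$ has degree exactly $n$, since each region has $n$ cells. Hence $G$ is a bipartite multigraph with $\Delta(G)=n$.

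By Theorem~\ref{konig}, $G$ admits a proper edge-colouring using the $n$ colours $1,\dots,n$. Transferring this back to the grid, define $W$ by filling each cell with the colour of its corresponding edge. Properness at the vertex $R_i$ says that the $n$ edges incident with $R_i$ use $n$ distinct colours, so row $i$ of $W$ contains each symbol exactly once; thus $W$ is a row-latin square. Properness at the vertex $A$ says that the $n$ edges incident with $A$ use $n$ distinct colours, so region $A$ contains each symbol exactly once, so $W$ is a row-realization of $F$.

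There is no real obstacle here; the lemma is essentially a restatement of K\"onig's theorem once the right multigraph is written down. The only thing to be careful about is the distinction between cells and row-region incidences: one must keep parallel edges separate so that the bijection between edges of $G$ and cells of the grid is preserved, and the colouring is read off per cell rather than per row-region pair.
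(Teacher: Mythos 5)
Your proof is correct and is essentially the same as the paper's: both build the bipartite multigraph on rows versus regions with multiplicities given by row--region incidences (equivalently, edges in bijection with cells), note that it is $n$-regular, and apply K\"onig's theorem to read off the symbols from a proper $n$-edge-colouring. The only cosmetic difference is that your edge-per-cell bookkeeping fixes which cell receives each symbol, whereas the paper assigns each coloured edge to an arbitrary cell of the corresponding row--region intersection.
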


\begin{proof} Suppose $F$ is a gerechte framework of order $n$. Let $G$ be the bipartite multigraph with vertex sets $\{\rho_1, \dots, \rho_n\}$ and $\{r_1, \dots, r_n\}$, representing respectively the rows and regions of $F$, where there are $l$ edges between $\rho_j$ and $r_k$ if $l$ cells of row $j$ belong to region $k$. Each vertex of $G$ has degree $n$, so by Theorem~\ref{konig}, we can give $G$ a proper edge-colouring with $n$ colours $c_1, \dots, c_n$. We can use this edge-colouring to create a row-latin square $W$ as follows: for each edge of colour $i$ between $\rho_j$ and $r_k$ we place symbol $i$ in one of the cells of row $j$ that belongs to region $k$ (which one does not matter). Since each vertex of $G$ is incident with exactly one edge of each colour, $W$ is a row-realization of $F$. \end{proof}

Suppose we have a gerechte framework $F$ with rectangular regions. If each region of $F$ is vertically aligned with all the regions above and below it, we say that its regions are \emph{arranged in columns}. For example, Figure~\ref{columns-fig} shows a gerechte framework of order 12 whose regions are arranged in columns.

\begin{theorem} Let $F$ be a gerechte framework whose regions are arranged in columns. Then $F$ is realizable. \label{aligned-in-columns} \end{theorem}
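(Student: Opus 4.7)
The plan is to combine Lemma~\ref{row-realization} with Hilton's Theorem~\ref{alst}. Because every region of $F$ is vertically aligned with all the regions above and below it, the $n$ columns of $F$ partition into consecutive \emph{column groups} $C_1, \dots, C_m$ of widths $w_1, \dots, w_m$, and within each $C_j$ the regions form a stack of $w_j$ rectangles of shape $(n/w_j) \times w_j$. (Two regions whose column ranges overlap share some column, so one lies above the other and the hypothesis forces them to have the same column range; this is what makes the column groups well-defined.)

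By Lemma~\ref{row-realization} there is a row-latin square $W$ realizing $F$. Since each row of $W$ is a permutation of $\{1, \dots, n\}$ and each of the $w_j$ regions inside $C_j$ contains every symbol exactly once, symbol $k$ appears exactly $w_j$ times among the cells of $W$ in column group $C_j$. I would then amalgamate the columns of $W$ within each column group, obtaining an $n \times m$ array $M$ whose cell $(r, j)$ is the set of $w_j$ symbols appearing in row $r$ of $W$ inside $C_j$.

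A short check shows that $M$ is an $(S, T, U)$-outline latin square with $S = (1, \dots, 1)$, $T = (w_1, \dots, w_m)$ and $U = (1, \dots, 1)$: condition~(iii) is immediate, condition~(i) follows from $W$ being row-latin, and condition~(ii) is exactly the count displayed above. Theorem~\ref{alst} then supplies a latin square $L$ whose $(S, T, U)$-amalgamation equals $M$. To finish, any region $R$ of $F$ lies in a single column group $C_j$ and occupies $n/w_j$ consecutive rows, so the multiset of symbols filling $R$ in $L$ is the multiset union of the corresponding $n/w_j$ cells of $M$, which in turn is the multiset of symbols filling $R$ in $W$, namely each of $1, \dots, n$ exactly once; hence $L$ realizes $F$. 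The only real content of the argument is the observation that no region straddles a column-group boundary, so that the chosen amalgamation is compatible with the framework; I do not foresee any serious combinatorial obstacle beyond making this compatibility explicit.
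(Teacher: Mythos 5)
Your proposal is correct and follows essentially the same route as the paper: obtain a row-realization via Lemma~\ref{row-realization}, amalgamate the columns within each vertically-aligned group of regions to get an outline latin square with $S=U=(1,\dots,1)$ and $T$ given by the region widths, and apply Theorem~\ref{alst}, noting that splitting the columns back only rearranges symbols within slices and so preserves the region contents. Your version just spells out more explicitly the well-definedness of the column groups and the verification of conditions (i)--(iii), which the paper leaves implicit.
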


\begin{proof} By Lemma~\ref{row-realization}, $F$ has a row-realization $W$. Suppose that, going along a row of $F$ from left to right, the regions have widths $w_1, \dots, w_t$. We can form an outline latin square by amalgamating columns whose cells belong to the same set of regions, to form an $(S,T,U)$-outline latin square $O$, where $S = U = (1, \dots, 1)$ and $T = (w_1, \dots, w_t)$. In $O$, column $j$ will contain each symbol $w_j$ times. (So if $F$ is the example in Figure~\ref{columns-fig}, $O$ is an outline latin square with 5 columns and 12 rows.)
By Theorem~\ref{alst}, $O$ is an amalgamation of a latin square $L$. By construction, each slice of $L$ contains the same symbols as the corresponding slice in $W$, possibly rearranged, and since $W$ is a row-realization of $F$, each region of $L$ contains each symbol once. Hence $L$ is a realization of $F$. \end{proof}	

\begin{figure}
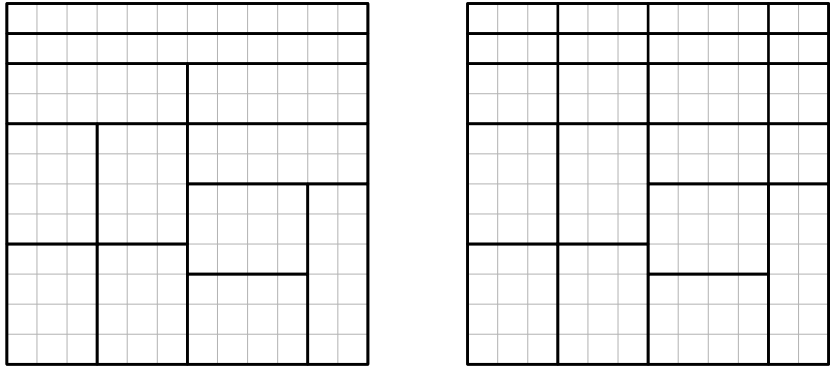

\centering \includegraphics{ger46.mps} \hspace{1cm} \includegraphics{ger46n.mps}
\caption{A gerechte framework $F$ whose regions are arranged in a tree structure (left) and the refined framework $F'$ (right).} \label{tree-fig}
\end{figure}

Suppose we have a gerechte framework $F$ with rectangular regions, and that $R$ is one of the regions. We define the \emph{top set} of $R$ to be the set of cells that are either in $R$, or are above $R$; and we define the \emph{bottom set} of $R$ to be the set of cells that are below $R$. (Note that the cells of $R$ are included in the top set, but not the bottom set.)

We say that a gerechte framework is \emph{arranged in a tree structure} if the bottom set of every region $R$ contains only complete regions. For example, Figure \ref{tree-fig} (left) shows a gerechte framework of order 12 whose regions are arranged in a tree structure.

If we have a gerechte framework $F$ whose regions are arranged in a tree structure, we can form what is called the \emph{refined framework} $F'$, by extending the vertical lines of $F$ upwards to the top of the square. Figure \ref{tree-fig} gives an example of a framework $F$ and its refined framework $F'$. Note that the refined framework is not in general a gerechte framework, as some regions may contain fewer than $n$ cells.

We can now give our final result. The proof is somewhat similar to that of Theorem \ref{aligned-in-columns}, in that we start with a row-realization and then construct an outline latin square, from which we can deduce that a realization exists.

\begin{theorem} Let $F$ be a gerechte framework whose regions are arranged in a tree structure. Then $F$ is realizable. \label{tree-structure} \end{theorem}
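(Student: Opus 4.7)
The plan is to mimic the proof of Theorem~\ref{aligned-in-columns}, with extra care because the column structure of $F$ can vary from row to row. First I would establish the following structural property: in a tree-structured framework, every vertical line of $F$ extends all the way to the bottom row of the square. Suppose for contradiction that a line at column $c$ terminates at row $r < n$, say as the right edge of a region $R$ with column range $[a,c-1]$; let $R'$ be the region containing the cell $(r+1, c)$, so $c$ lies in $R'$'s column range. If $R'$'s column range intersects $[a,c-1]$, then $R'$ is a partial region in $R$'s bottom set, violating the tree condition at $R$; otherwise $R'$'s left edge is at $c$, so the line continues to row $r+1$, contradicting termination. Hence every line extends to the bottom, and the refined framework $F'$ partitions the columns into fixed full-height ``mega-columns'' $I_1, \dots, I_s$ of widths $v_1, \dots, v_s$, where $I_p$ is the column range of the $p$-th bottom-row region. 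In particular, the column range of every $F$-region is a union of some of the $I_p$'s.

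Next, by Lemma~\ref{row-realization} there is a row-realization $W$ of $F$. I would amalgamate its columns along the partition $I_1, \dots, I_s$ to form an $(S,T,U)$-outline latin square $O$ with $S = U = (1, \dots, 1)$ and $T = (v_1, \dots, v_s)$. The crucial requirement is that each mega-column of $O$ contain each symbol exactly $v_p$ times; unlike Theorem~\ref{aligned-in-columns}, this is not automatic, because $F$-regions can span several mega-columns. The main technical hurdle is therefore to show that $W$ can be chosen with this extra mega-column balance. My plan is to use Theorem~\ref{vkonig} twice. First, apply it to the bipartite multigraph $\Gamma$ whose vertex classes are the $F$-regions and the mega-columns, with $h_R v_p$ parallel edges between region $R$ and each mega-column $I_p$ contained in $R$'s column range; here each region has degree $h_R w_R = n$ and each mega-column has degree $n v_p$, both multiples of $n$. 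An equitable $n$-edge-colouring of $\Gamma$, with colours read as symbols, yields for each region $R$ an assignment of each symbol to a specific mega-column $p \in P_R$ (where $P_R = \{p : I_p \subseteq J_R\}$) such that each symbol is assigned to mega-column $p$ in exactly $v_p$ different regions. A second equitable-colouring argument, applied within each (region, mega-column)-strip, distributes the $h_R v_p$ symbols assigned to that strip among the $h_R$ rows of $R$ with $v_p$ symbols per row, in such a way that the combined placement is a row-realization of $F$ exhibiting mega-column balance.

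Finally, with $O$ an outline latin square having the additional property that each $F$-region's sub-rectangle in $O$ contains each symbol exactly once (since each symbol lies in exactly one mega-column within each region), I would apply Theorem~\ref{alst} to obtain a latin square $L$ whose $(S,T,U)$-amalgamation is $O$. The region condition on $O$ then lifts to $L$, so $L$ realizes $F$. The principal obstacle I anticipate is the coordination between the two applications of equitable edge-colouring: the choices must be made jointly so that mega-column balance, row-latinity, and the region-realization condition all hold simultaneously; this is the ingredient beyond what is needed in the arranged-in-columns case handled by Theorem~\ref{aligned-in-columns}.
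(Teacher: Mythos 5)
Your structural observation is correct (in a tree structure every vertical line of $F$ runs to the bottom row, so the columns split into full-height mega-columns and every region's column range is a union of them), your graph $\Gamma$ does have all degrees divisible by $n$ so Theorem~\ref{vkonig} gives the assignment of each symbol in each region to a mega-column with the right mega-column multiplicities, and the final amalgamation/Hilton step is fine. The gap is exactly where you place it yourself: the ``second equitable-colouring argument.'' Applying Theorem~\ref{vkonig} \emph{within each (region, mega-column)-strip} separately cannot produce row-latinity, because each row of the square passes through many strips belonging to different regions, and the requirement that every symbol occur exactly once in every row is a global constraint coupling all the strips that meet that row. What you actually need is: given the fixed strip sets $S_{R,p}$ produced in your first stage, a simultaneous choice, for every region $R$, of which row of $R$ receives each symbol, satisfying (a) exactly $v_p$ symbols of $S_{R,p}$ per row of $R$ for every $p$, and (b) each symbol exactly once in each row of the whole square. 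This is a multi-constraint distribution problem, not an instance of Theorem~\ref{vkonig}, and you have given no argument that it is feasible for an arbitrary output of your first stage; it is, in essence, the whole difficulty of the theorem, so the proposal reduces the theorem to an unproved claim of comparable strength.

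The paper avoids this coordination problem by reversing the order of the two fixes. It starts from a row-realization $W$ of $F$ (Lemma~\ref{row-realization}), so the row condition and the once-per-region condition hold from the outset, and then restores the mega-column balance by rearranging symbols \emph{only within slices} (a slice lies in a single row and a single region, so both conditions are automatically preserved). The rearrangement is done top-down along the tree: for each top set $T_i$ one partitions $T_i$ into chunks by the region boundaries directly below it, checks via the tree structure that each chunk has a multiple of $n$ cells, and uses one application of Theorem~\ref{vkonig} per $T_i$ to equalize the symbol counts over the chunks; iterating down the tree yields a row-realization $W'$ that is balanced on every mega-column, after which the amalgamation and Theorem~\ref{alst} finish as in Theorem~\ref{aligned-in-columns}. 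If you want to salvage your order of operations, you would still need an induction down the tree (using the laminar refinement of the column partition from row to row) to show your first-stage assignment can be realized row by row --- at which point you are reconstructing the paper's argument, with the added burden of compatibility with a mega-column assignment you fixed blindly in advance.
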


\begin{proof} We begin as in the proof of Theorem \ref{aligned-in-columns}, by choosing a row-realization $W$ of $F$. We shall describe the construction of another square $W'$, which will have the property that when it is partitioned by the refined framework $F'$, each set of vertically-aligned regions will contain equal numbers of each symbol. $W'$ will be constructed from $W$ by rearranging the symbols within its slices. As the rearrangement of symbols will occur within slices, $W'$ will also be a row-realization of $F$.

\begin{figure}
\centering \includegraphics{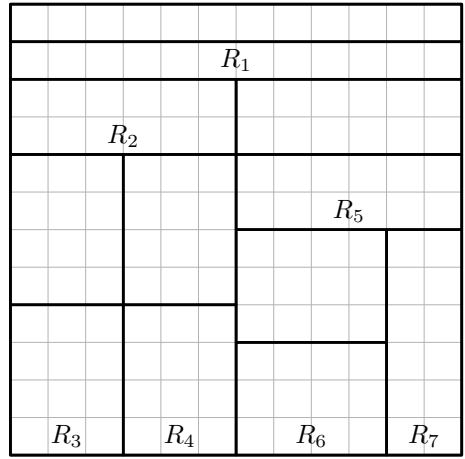}
\caption{The regions $R_1, \dots, R_7$ of the gerechte framework from Figure \ref{tree-fig}.} \label{regions1-7}
\end{figure}

We begin by partitioning the regions of $F$ into equivalence classes according to vertical alignment. Suppose there are $p$ such classes. We choose a set of representatives, one region from each class, by choosing the region from each class that is closest to the bottom of the square. (See Figure \ref{regions1-7}, where we have done this with the gerechte framework from Figure \ref{tree-fig}.) We can then order the representatives $R_1, \dots, R_p$ so that whenever a region $R_i$ is above a region $R_j$, $i < j$. (A simple way to do this is just to order the regions according to the row they begin in, starting with the regions that begin in row 1.)

Suppose the top sets of the regions $R_1, \dots, R_p$ are $T_1, \dots, T_p$.
We shall describe the construction of a sequence of row-latin squares $W_1, \dots, W_p$ that will each be row-realizations of $F$. The square $W_i$ will have the property that the top sets of regions directly below $T_i$ contain the same number of copies of each symbol. The final square, $W_p$, will be the desired $W'$.

Suppose the row-latin squares $W_1, \dots, W_{i-1}$ have been constructed already, and that we wish to construct $W_i$. We may assume that $T_i$ contains the same number of copies of each symbol. The idea is to take $W_{i-1}$, and rearrange the symbols within the slices of $T_i$, to create $W_i$.

We partition $T_i$ into parts, called \emph{chunks}, according to the region boundaries directly below $T_i$; the rule is that we extend the vertical lines of $F$ that divide the regions \emph{directly below} $T_i$ upwards through $T_i$ to the top of the square. (See Figure~\ref{chunks}.) (Note that it is important that we only do this for the boundaries of $F$ that are directly below $T_i$.)

Since the regions of $F$ are arranged in a tree structure, the set of cells below each chunk consists of complete regions of $F$, and so contains a number of cells that is a multiple of $n$. Since there are $n$ cells in each column, it follows that the number of cells in each chunk is itself a multiple of $n$.

We claim that we can rearrange the slices of $T_i$ so that each chunk contains the same number of copies of each symbol. Suppose that $T_i$ is of size $s \times t$, and that there are $m$ chunks, of widths $t_1, \dots, t_m$, and let
\[d = \gcd \{t_1, \dots, t_m\}.\]
Since $n \mid st_i$ for each $i \in \{1, \dots, m\}$, we have $n \mid sd$. It will suffice to show that we can rearrange the symbols within each row, so that if $T_i$ is partitioned into $q=t/d$ chunks of width $d$, each symbol appears the same number of times in each chunk. (Since the same will then also be true of the original chunks.)

\begin{figure}
\centering
\begin{tikzpicture}
\fill[gray!10] (0,0) rectangle (6,1.5);
\draw[gray, dashed] (0,3.0)--(0,1.5) (6,3.0)--(6,1.5)
(2,0)--(2,3.0) (4,0)--(4,3.0);
\draw[black, thick] (0,0)--(6,0) (0,0.5)--(6,0.5) (0,1.0)--(6,1.0)
(-0.2,1.5)--(6.2,1.5) (-0.2,3.0)--(6.2,3.0)
(0,-0.2)--(0,1.5) (6,-0.2)--(6,1.5)
(2,0)--(2,-0.2) (4,0)--(4,-0.2);
\draw[black] (3,0) node[above] {$R_i$};
\end{tikzpicture}
\caption{$T_i$ consists of $R_i$ and the regions vertically-aligned with it, and the cells above. The dashed lines indicate the partition into chunks.}
\label{chunks}
\end{figure}
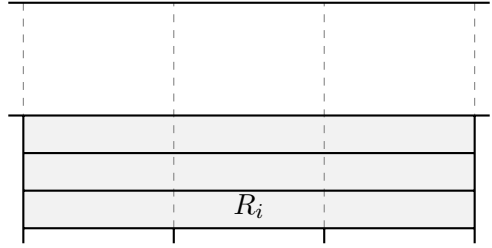

Let $G$ be the bipartite graph with vertex sets
\[\{\rho_1, \dots, \rho_s\} \text{\ and \ } \{\sigma_1, \dots, \sigma_n\},\]
representing respectively the rows of $T_i$ and the symbols $1, \dots, n$, where there is an edge between $\rho_j$ and $\sigma_k$ if row $j$ of $T_i$ contains the symbol $k$. Then each of the vertices $\rho_1, \dots, \rho_s$ has degree $t$, and each of the vertices $\{\sigma_1, \dots, \sigma_n\}$ has degree $r = st/n$.

By definition, $q \mid t$, and we have $q \mid r$, because $r = st/n = qds/n$, and $ds/n$ is an integer. So by Theorem~\ref{vkonig}, we can give $G$ an equitable edge-colouring with $q$ colours $c_1, \dots, c_q$. We can then rearrange the symbols in row $j$ of $T_i$ by placing the symbol $k$ in a cell in chunk $h$ if there is an edge of colour $c_h$ between $\sigma_k$ and $\rho_j$.

Once this procedure is complete, the top sets of regions directly below $T_i$ will contain the same number of copies of each symbol, and we will have constructed $W_i$. In this way we can construct the row-latin squares $W_1, \dots, W_p = W'$.

We can then partition $W'$ by the refined framework $F'$, and form an outline latin square by amalgamating columns whose cells belong to the same set of regions of $F'$, to form an $(S,T,U)$-outline latin square $O$, where $S = U = (1, \dots, 1)$ and $T = (w_1, \dots, w_u)$, where $w_1, \dots, w_u$ are the widths of the regions of $F'$. In $O$, column $j$ will contain each symbol $w_j$ times.

By Theorem \ref{alst}, $O$ is an amalgamation of a latin square $L$. By construction, each slice of $L$ contains the same symbols as the corresponding slice in $W$, possibly rearranged, and since $O$ is a row-realization of $F$, each region of $L$ contains each symbol once. Hence $L$ is a realization of $F$. \end{proof}

\section{Concluding remarks}

In the above, we considered gerechte frameworks whose regions are rectangles. It would be interesting to see if Theorem~\ref{main} is still true if we allow the rectangles to ``wrap around'' the edges. In other words, if we imagine the cells to be drawn on a torus, rather than a square.

However, the central question seems to be the following.

\begin{question} Let $F$ be a gerechte framework for which each region is a rectangle. If $F$ necessarily realizable? \end{question}

\section*{Acknowledgements}

The authors would like to thank Peter J. Cameron and A. J. W. Hilton for some helpful discussions.

\end{document}